\documentclass[11pt,a4paper]{article} 
\usepackage[utf8]{inputenc} 
\usepackage[T1]{fontenc} 
 \usepackage{amsmath, amssymb, amsthm} 
 \usepackage{mathtools}
  \usepackage[a4paper,margin=2.5cm]{geometry} \usepackage[colorlinks=true,linkcolor=blue,citecolor=blue,urlcolor=blue]{hyperref}    \newtheorem{theorem}{Teorema}[section]  \newtheorem{lema}[theorem]{Lema}  \theoremstyle{definition}   \theoremstyle{remark} \def\R{\mathbb R} \title{On the Minkowski deficit for surfaces of
revolution} \author{Joaquim Bruna, Julià Cufí and Agustí Reventós} \date{10 September,  2026} \begin{document} \maketitle \begin{abstract} By studying the classical Minkowski's inequality, $4\pi A\leq M^{2}$, relating area and mean curvature, in the particular case of convex surfaces of revolution, one is led to a functional inequality that holds for general  functions. We state and prove this inequality obtaining a new proof of Minkowski's inequality in this particular case and we establish a sharp inequality between the Minkowski deficit of the surface and the isoperimetric deficit of any meridian curve. \end{abstract}

\section{Introduction} \label{02a}
In this note we interpretate a classical Minkoswki's  
inequality for the special case of surfaces of revolution as a Wirtinger's type inequality with respect to a weight.

Denoting by $M$ the total mean curvature of a convex surface $S$ in $\R^3$ and by $F$
its area,  Minkowski's inequality states that $4\pi F\leq M^2$ and equality holds if and only if $S$ is a sphere.

On another hand if $p(t)$ is  a real function of class  ${\cal C}^2$ on the interval $[-\pi/2,\pi/2]$, satisfying 
\begin{equation*}p(t)>0,  \, p'(\pi/2)=p'(-\pi/2)=0,\,
p(t)+p''(t)>0,
\end{equation*}
the curve in the $x,z$ plane with support function $p(t)$
generates, by rotating about the $z$ axis, a differentiable convex surface $S$. It comes out that Minkowski's inequality for this surface of revolution $S$
 is equivalent to the inequality
 \begin{eqnarray*}\int_{-\pi/2}^{\pi/2}(2p^{2}-p'\,^{2})\cos(t)\,dt\leq \left(\int_{-\pi/2}^{\pi/2}p\cos(t)\,dt\right)^{2}.\end{eqnarray*}
 This is the content 
 of Section \ref{2804e}. 
In Theorem \ref{1405a} of section \ref{1405} we prove that the above integral inequality holds for any ${\cal C}^1$ function on $[-\pi/2,\pi/2]$ and we characterize when equality holds. 

So, in particular,   we provide a new proof  of Minkowski's inequality for the special case of surfaces of revolution. 

In section 4 we prove a sharp inequality between the Minkowski deficit of the surface $S$ and the isoperimetric deficit of any meridian curve $C$ of $S$. More concretely we show that
                                   $$M^2 - 4\pi F \leq  \pi (L^2 - 4\pi A),$$
where $L$ is  the length of $C$ and $A$ the area enclosed by $C$. The constant $\pi$ is sharp and equality holds only when $S$ is a sphere.
 
\section{Surfaces of revolution}\label{2804e}
We recall that  Minkoswki's  
inequality for a convex surface of area $A$ and total mean curvature $M$, states that 
\begin{equation}\label{1506}4\pi F\leq M^{2}\end{equation}
with equality only for spheres  \cite{Mink}.

In this section we consider this inequality for the special case of surfaces of revolution and relate it to functional integral inequality.

Let $p(t)$ be a real function of class  ${\cal C}^2$ defined in the interval $[-\pi/2,\pi/2]$, satisfying the conditions 
\begin{equation}\label{2804d}p(t)>0, \, p'(\pi/2)=p'(-\pi/2)=0,\,
p(t)+p''(t)>0.\end{equation}

Then the curve $C$ in the $x,z$ plane, with support function $p(t)$
\begin{eqnarray*}
x(t)&=&p(t)\cos(t)-p'(t)\sin(t)\\
z(t)&=&p(t)\sin(t)+p'(t)\cos(t)
\end{eqnarray*}
generates,   by rotating  about the $z$-axis,  a differentiable convex surface $S$.

The standard parametrization of $S$ is
$$X(t, \theta)=(x(t)\cos\theta, x(t)\sin\theta, z(t)), \quad -\pi/2\leq t\leq \pi/2, \; 	\;0\leq \theta\leq 2\pi.$$

The principal curvatures of $S$ are given by
$$k_{1}(t, \theta)=k(t), \quad k_{2}(t, \theta)=\frac{\cos(t)}{x(t)}$$
where $k(t)$ is the curvature of $C$.

Recall that the curvature radius  $\rho(t)$ of $C$
is given by
$$\rho(t)=\frac{1}{k(t)}=p(t)+p''(t).$$
The coefficients 
of the first and the second fundamental forms of $S$, with respect to the above parametrization, 
 are 
$$E=\rho(t)^{2},\; F=0, \; G=x(t)^{2}, \qquad e=\rho(t),\;  f=0,\;  g=x(t)\cos(t).$$
From this it follows that the mean curvature $H$ is given by
$$2H=k_{1}+k_{2}=\frac{Eg-2Ff+Ge}{EG-F^2}=\frac{\rho(t)\cos(t)+x(t)}{\rho(t)x(t)}$$
and the surface element is 
$$dS=|\rho(t)x(t)|dt\, d\theta=\rho(t)x(t)dt\, d\theta, $$
since $\rho(t)>0$ by hypothesis and it can be easily seen,  using  $x'(t)=-\rho(t)\sin(t)$,  that $x(t)>0$.

Hence, denoting by $M=\int_{S}HdS$ the total mean curvature, we have 
$$M=\pi\int_{-\pi/2}^{\pi/2}(2p(t)\cos(t)+p''(t)\cos(t)-p'(t)\sin(t)) dt$$
and taking into account that 
$$(p'(t)\cos(t))'=p''(t)\cos(t)-p'(t)\sin(t)$$
one has
\begin{equation}\label{2804}M=2\pi\int_{-\pi/2}^{\pi/2}p(t)\cos(t)\, dt.\end{equation}

Concerning the area of the surface $S$
we have 
\begin{eqnarray}\label{2704b}F&=&\int_{\partial K}\rho(t) x(t)dt d\theta=2\pi\int_{-\pi/2}^{\pi/2}(p+p'')(p\cos(t)-p'\sin(t))dt\nonumber
\\&=&2\pi\int_{-\pi/2}^{\pi/2}\big(p^{2}\cos(t)-pp'\sin(t)+pp''\cos(t)-p'p''\sin(t)\big)\,dt.
\end{eqnarray}
Integrating between $-\pi/2$ and $\pi/2$
the equalities 
\begin{eqnarray*}(pp'\cos(t))'&=&p'^2\cos(t)+pp''\cos(t)-pp'\sin(t),\\(p'^2\sin(t))'&=&2p'p''\sin(t)+p'^2\cos(t)
,\end{eqnarray*}
and taking into account that $p'(-\pi/2)=p'(\pi/2)=0$
we obtain
\begin{eqnarray*}
\int_{-\pi/2}^{\pi/2}p'^{2}\cos(t)\,dt&=&\int_{-\pi/2}^{\pi/2}pp'\sin(t)\,dt-\int_{-\pi/2}^{\pi/2}pp''\cos(t)\,dt,\\
\int_{-\pi/2}^{\pi/2}p'^{2}\cos(t)\,dt&=&-2\int_{-\pi/2}^{\pi/2}p'p''\sin(t)\,dt.
\end{eqnarray*}
Substituting  these values in \eqref{2704b}
we obtain

\begin{equation}\label{1404}F=2\pi\int_{-\pi/2}^{\pi/2} (p^2\cos(t)-\frac{1}{2}p'^2\cos(t))dt=\pi\int_{-\pi/2}^{\pi/2} (2p^2-p'^2)\cos(t) \,dt.\end{equation}


 Then, by \eqref{2804} and \eqref{1404}, the Minkowski's inequality $4\pi A\leq M^{2}$, in the particular case of a surface of revolution, is equivalent  to
\begin{eqnarray*}\int_{-\pi/2}^{\pi/2}(2p^{2}-p'\,^{2})\cos(t)\,dt\leq \left(\int_{-\pi/2}^{\pi/2}p\cos(t)\,dt\right)^{2}\end{eqnarray*}
for a function $p(t)$ satisfying the conditions  \eqref{2804d}.
For a such function $p(t)$ equality holds only when $p(t)$ is a constant, that is, the surface is a sphere.

\bigskip
We shall prove in the next section that the above inequality holds for a ${\cal C}^1$ function without any restriction.

\section{The Wirtinger type inequality}\label{1405}

The Wirtinger  inequality states that for a $2\pi$-periodic function $f(t)$ of class ${\cal C}^{1}$ with $\int_{0}^{2\pi}f(t)\,dt=0$ one has
$$\int_{0}^{2\pi}f^{2}(t)\,dt\leq \int_{0}^{2\pi}f'^{2}(t)\,dt,$$
with equality if and only if $f(t) = a \sin(t) + b \cos(t)$ for some $a,b\in\R$. 

\bigskip
We shall prove here an analogous  inequality when integration is taken with respect to a more general measure.

\begin{theorem}\label{1405a}
Let $p(t)$ be a real function of class ${\cal C}^1$ in $[-\pi/2,\pi/2]$. Then  
\begin{eqnarray}\label{feb23}\int_{-\pi/2}^{\pi/2}(2p^{2}-p'\,^{2})\cos(t)\,dt\leq \left(\int_{-\pi/2}^{\pi/2}p\cos(t)\,dt\right)^{2}.\end{eqnarray}
Equality holds if and only if  $p(t)=a_{0}+a_{1}\sin(t)$ with $a_{0},a_{1}$ some constants. 
\end{theorem}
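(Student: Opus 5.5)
The plan is to remove the weight $\cos(t)$ with the change of variables $s=\sin(t)$, which turns the inequality into a statement about Legendre expansions on $[-1,1]$. Set $f(s)=p(\arcsin s)$ for $s\in[-1,1]$. Then $f$ is continuous, hence bounded, on $[-1,1]$ and of class ${\cal C}^1$ on $(-1,1)$, with $p'(t)=f'(s)\cos(t)$ and $ds=\cos(t)\,dt$. The three integrals in \eqref{feb23} become
$$\int_{-\pi/2}^{\pi/2}p^2\cos(t)\,dt=\int_{-1}^{1}f^2\,ds,\qquad \int_{-\pi/2}^{\pi/2}p'^2\cos(t)\,dt=\int_{-1}^{1}(1-s^2)f'^2\,ds,\qquad \int_{-\pi/2}^{\pi/2}p\cos(t)\,dt=\int_{-1}^{1}f\,ds .$$
The middle integral is finite, since it equals the original one. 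So \eqref{feb23} is equivalent to
$$2\int_{-1}^{1}f^2\,ds-\int_{-1}^{1}(1-s^2)f'^2\,ds\leq\Big(\int_{-1}^{1}f\,ds\Big)^2 .$$
The operator $((1-s^2)f')'$ appearing here is the Legendre operator, which suggests expanding $f$ in Legendre polynomials.

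Write $c_n$ for the Legendre coefficients of $f$, so that $\int_{-1}^1 fP_n\,ds=\frac{2}{2n+1}c_n$. Since $f\in L^2(-1,1)$, Parseval gives
$$\int_{-1}^1 f^2\,ds=\sum_{n\ge0}\frac{2}{2n+1}c_n^2,\qquad \int_{-1}^1 f\,ds=2c_0 .$$
For the derivative term I would not try to prove Parseval, only a Bessel inequality, which is all that is needed. Integrate by parts on $[-1+\varepsilon,1-\varepsilon]$ and let $\varepsilon\to0$. The boundary terms $(1-s^2)P_n'f$ vanish because $f$ is bounded. Using $((1-s^2)P_n')'=-n(n+1)P_n$, this gives
$$\int_{-1}^1(1-s^2)f'P_n'\,ds=n(n+1)\,\frac{2}{2n+1}\,c_n .$$
The functions $\sqrt{1-s^2}\,P_n'$ for $n\ge1$ are orthogonal in $L^2(-1,1)$ with squared norms $\frac{2n(n+1)}{2n+1}$; this follows from the same integration by parts. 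Bessel's inequality applied to $\sqrt{1-s^2}\,f'$ then yields
$$\int_{-1}^1(1-s^2)f'^2\,ds\ \ge\ \sum_{n\ge1}\frac{2n(n+1)}{2n+1}c_n^2 .$$

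Combining these,
$$2\int f^2-\int(1-s^2)f'^2\ \le\ \sum_{n\ge0}\frac{2}{2n+1}\big(2-n(n+1)\big)c_n^2=4c_0^2+0\cdot c_1^2-\sum_{n\ge2}\frac{2(n(n+1)-2)}{2n+1}c_n^2\ \le\ 4c_0^2=\Big(\int f\Big)^2 .$$
This proves the inequality. If equality holds, the last step forces $c_n=0$ for all $n\ge2$, because $n(n+1)-2>0$ for those $n$. Then $f=c_0+c_1s$, that is, $p(t)=a_0+a_1\sin(t)$. Conversely, such a $p$ gives equality by direct computation: both sides equal $4a_0^2$.

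The only delicate step is the justification near $s=\pm1$, where $f'$ may blow up. Two things must be checked there: that the boundary terms in the integration by parts vanish, and that the formal identity for $\int(1-s^2)f'^2$ is replaced by the Bessel inequality, whose direction is exactly the one required. Both rest only on the boundedness of $f$ and the finiteness of $\int p'^2\cos(t)\,dt$.
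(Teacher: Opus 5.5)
Your proof is correct, but it handles the derivative term differently from the paper.

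The two proofs share the same framework: the substitution $s=\sin t$ and the Legendre expansion. The paper proves the exact identity $\int_{-1}^1(1-x^2)u'^2\,dx=\sum n(n+1)a_n^2$ only for $u$ of class ${\cal C}^2$ on a neighborhood of $[-1,1]$, using self-adjointness of the Legendre operator and Parseval. It then reaches a general $u$ by dilating to $u_r(x)=u(rx)$, mollifying, and letting $r\to1$.

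You instead work directly with $f$. You replace the identity by a Bessel inequality for $\sqrt{1-s^2}\,f'$ against the orthogonal system $\sqrt{1-s^2}\,P_n'$, $n\ge1$. This uses only orthogonality, not completeness of that system. It also needs boundedness of $f$ to kill the boundary terms, and finiteness of $\int(1-s^2)f'^2$. The inequality obtained points in exactly the direction required.

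What your route buys:
\begin{itemize}
\item No approximation step is needed.
\item The equality case comes out cleanly. Equality forces both the Bessel slack and $\sum_{n\ge2}\frac{2(n(n+1)-2)}{2n+1}c_n^2$ to vanish, so $c_n=0$ for $n\ge2$.
\end{itemize}
By contrast, the paper's equality argument (``if equality holds in \eqref{feb23}, equality also holds in Lemma~\ref{lema1}'') does not follow from its limiting procedure as written. Strict inequalities for the approximants $u_r\star\varphi_n$ can become an equality in the limit, and Lemma~\ref{lema1} only asserts that linear functions give equality.

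What the paper's route buys is the exact spectral identity for smooth functions. This makes transparent that the inequality amounts to comparing the Legendre eigenvalues $n(n+1)$ with the constant $2$.
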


Performing the change of variable $x=\sin(t)$ and denoting $u(x)=p(\arcsin(x))$
inequality \eqref{feb23} becomes
\begin{equation}\label{1006f}\int_{-1}^{1}\big(2u(x)^{2}-u'(x)^{2}(1-x^{2})\big)\,dx\leq \bigg(\int_{-1}^{1}u(x)\,dx\bigg)^{2},\end{equation}
where the function $u$ is continuous on $[-1,1]$, of class ${\cal C}^1$ on $(-1,1)$ and $u'(x)^2(1-x^2)$ is integrable on $(-1,1)$.

\medskip

To prove the above inequality we shall use the following 
\begin{lema}\label{lema1} Let $u$ be a real-valued function of class ${\cal C}^{2}$  on an open neighborhood of $[-1,1]$. Then
\begin{equation*}\label{1006}\int_{-1}^{1}\big(2u(x)^{2}-u'(x)^{2}(1-x^{2})\big)\,dx\leq \bigg(\int_{-1}^{1}u(x)\,dx\bigg)^{2}.\end{equation*}
Equality holds for $u(x)=a_{0}+a_{1}x$, with $a_{0},a_{1}$
some constants.\end{lema}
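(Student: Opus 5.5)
Expand $u$ in Legendre polynomials $P_n$, the eigenfunctions of the Legendre operator $Lu=-((1-x^2)u')'$ with $LP_n=n(n+1)P_n$. Since $u$ is $\mathcal C^2$ on a neighborhood of $[-1,1]$, integration by parts is legitimate and the boundary terms vanish because $1-x^2=0$ at $\pm1$:
\[
\int_{-1}^{1}u'(x)^2(1-x^2)\,dx=\int_{-1}^{1}u\,Lu\,dx .
\]
Write $u=\sum_{n\ge0}c_nP_n$, converging in $L^2(-1,1)$, and use $\int_{-1}^1P_n^2\,dx=2/(2n+1)$. Parseval then gives
\[
\int_{-1}^1u^2\,dx=\sum_n \frac{2c_n^2}{2n+1},\qquad
\int_{-1}^1u'^2(1-x^2)\,dx=\sum_n n(n+1)\frac{2c_n^2}{2n+1}.
\]
The second identity follows because the coefficients of $Lu\in\mathcal C^0$ are $n(n+1)c_n$, by self-adjointness of $L$ (again integrate by parts twice; the boundary terms vanish). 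Since $P_0=1$, we also have $\int_{-1}^1u\,dx=2c_0$.

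Substituting these expressions, the left-hand side equals
\[
\sum_n \frac{2c_n^2}{2n+1}\bigl(2-n(n+1)\bigr).
\]
The $n=0$ term is $4c_0^2$, which is exactly $\bigl(\int_{-1}^1 u\,dx\bigr)^2$. The $n=1$ term vanishes, since $2-1\cdot2=0$. Every term with $n\ge2$ is $\le0$, because $n(n+1)\ge6>2$. Hence the inequality holds. Moreover, the difference between the two sides is $\sum_{n\ge2}\frac{2c_n^2}{2n+1}(n(n+1)-2)$, which vanishes exactly when $c_n=0$ for all $n\ge2$, that is, when $u=a_0+a_1x$. This confirms the equality case; the lemma only needs the "if" direction, which also follows by direct computation.

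The only delicate step is justifying the Parseval identity for the weighted Dirichlet form. I would handle it by computing $\int u\,Lu$ through the Legendre coefficients of $u$ and $Lu$: both functions lie in $L^2$, so Parseval in its bilinear form applies. That the coefficients of $Lu$ are $n(n+1)c_n$ follows from $\int (Lu)P_n=\int u\,LP_n$, which holds because $u$ is $\mathcal C^2$ up to the endpoints. Completeness of the Legendre polynomials in $L^2(-1,1)$ is standard.
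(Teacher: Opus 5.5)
Your proof is correct and follows the paper's argument essentially step for step: you expand $u$ in Legendre polynomials, obtain $\int_{-1}^{1}u'^2(1-x^2)\,dx=\langle u,L[u]\rangle$ by integration by parts, use the self-adjointness of $L$ to show that the coefficients of $L[u]$ are $n(n+1)$ times those of $u$, and apply Parseval to get $\sum_n(2-n(n+1))a_n^2\le 2a_0^2$. The only difference is cosmetic: you work with the unnormalized $P_n$ and carry the factors $2/(2n+1)$, whereas the paper uses the orthonormal basis $e_n=P_n/\|P_n\|$; your remark that equality holds exactly when $c_n=0$ for $n\ge2$ goes slightly beyond what the lemma asks.
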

\begin{proof} 
Let 
$$u(x)=\sum_{n=0}^{\infty}a_{n}e_{n}(x)$$
be the development of the function  $u$ in the orthonormal  basis given by the normalized  Legendre polynomials   $e_{n}(x)=\dfrac{P_{n}(x)}{\|P_{n}\|}$, in the Hilbert space   ${\cal L}^2([-1,1])$ with the standard scalar product.

Note that $$\int_{-1}^{1}u(x)\,dx=\int_{-1}^{1}a_{0}\frac{1}{\sqrt{2}}\,dx=a_{0}\sqrt{2},$$ 
since $\int_{-1}^1 e_{n(x)}\,dx=0$, for $n\geq 1$.

\medskip
Also, by  Parseval's identity, it is  $$\int_{-1}^{1}u(x)^{2}\,dx=\sum_{n=0}^{\infty}a_{n}^{2}.$$

On the other hand, we recall that the Legendre differential operator 
is given by $$L[u](x)=-(u'(x)(1-x^{2}))'=-u''(x)(1-x^{2})+2xu'(x),$$
and, since $u(x)$ is of class ${\cal C}^{2}([-1,1])$, we have 
$L[u]\in {\cal L}^2([-1,1])$. Thus, one can write
$$L[u](x)=\sum_{n=0}^{\infty}d_{n}e_{n}(x).$$
We recall that $L$
is self-adjoint, and that the Legendre polynomials $P_{n}(x)$ are eigen-vectors of $L$ with eigen-values $n(n+1)$, \cite{Szego}. So we have
$$d_{n}=\langle L[u], e_{n}\rangle=\langle u, Le_{n}\rangle=\langle u, n(n+1)e_{n}\rangle= n(n+1)a_{n}.$$

Now,  using integration by parts and Plancherel's identity it follows 
\begin{eqnarray*}
\int_{-1}^{1}u'^{2}(1-x^{2})\,dx&=&\int_{1}^{1}u(-u'(1-x^{2}))'\,dx=\langle u, L[u]\rangle =\sum_{n=0}^{\infty}a_{n}d_{n}\\&=&
\sum_{n=0}^{\infty}n(n+1)a_{n}^{2}.
\end{eqnarray*}

Finally we have

\begin{eqnarray*}
\int_{1}^{1}2u^{2}\,dx-\int_{-1}^{1}u'^{2}(1-x^{2})\,dx&=&2\sum_{n=0}^{\infty}a_{n}^{2}-\sum_{n=0}^{\infty}n(n+1)a_{n}^{2}\\&=&
\sum_{n=0}^{\infty}a_{n}^{2}(2-n(n+1))\leq 2a_{0}^{2}=(\int_{-1}^{1}u(x)\,dx)^{2}.
\end{eqnarray*}
Equality hods when  $a_{k}=0$, for $k\geq 2$, that is $u(x)=a_{0}+a_{1}e_{1}(x)$. \hfill $\square$

\end{proof}

\medskip

{\em Proof of the Theorem  \ref{1405a}}. 
We shall approximate the function $u$ appearing in inequality \eqref{1006f} by means of functions satisfying the hypothesis of  Lemma \ref{lema1}.

For $0<r<1$ consider the functions $u_{r}(x)=u(rx)$. Since $u$ is of class ${\cal C}^1$ on $(-1,1)$, so is $u_{r}$ on $(-1/r,1/r)$ and we can extend $u_{r}$ to a ${\cal C}^{1}$ 
function with compact suport on $\R$, that still will be named $u_{r}$.

Let $\varphi_{n}$ be an approximation of the identity, and consider the convolutions $$(u_{r}\star \varphi_{n})(x)=\int_{\R}u_{r}(t)\varphi_{n}(x-t)\,dt.$$
These functions are ${\cal C}^{\infty}$with compact support  on $\R$ (see \cite{Lang}), and so, by Lemma \ref{lema1},  they satisfy inequality \eqref{1006f}.
Now, for $n\to \infty $, $u_{r}\star \varphi_{n}$ tends to $u_{r}$ and  $(u_{r}\star \varphi_{n})'$  tends to $u_{r}'$, uniformly. So, inequality \eqref{1006f} holds for $u_{r}$, that is
\begin{equation*}\int_{-1}^{1}\big(2u_{r}(x)^{2}-u_{r}'(x)^{2}(1-x^{2})\big)\,dx\leq \bigg(\int_{-1}^{1}u_{r}(x)\,dx\bigg)^{2}.\end{equation*}

Finally we need taking the limit as  $r\to 1$ in the above inequality. For this 
we will apply  the Lebesgue dominated convergence theorem. Since $u_{r}(x)$ is bounded independently of $r$,  we have
$$\lim_{r\to 1}\int_{-1}^{1}u_{r}(x)^2\,dx=\int_{-1}^{1}u(x)^2\,dx, \quad \lim_{r\to 1}\int_{-1}^{1}u_{r}(x)\,dx=\int_{-1}^{1}u(x)\,dx, $$

For the third integral    observe that
putting $y=rx$,
$$\int_{-1}^1 r^2u'(rx)^2(1-x^2)\,dx=\int_{-r}^r ru'(y)^2(1-\frac{y^2}{r^2})\,dy\leq \int_{-1}^1 u'(y)^2(1-y^2)\,dy,$$
and this last integral is finite by hypothesis. 

So
$$\lim_{r\to 1}\int_{-1}^1u_{r}'(x)^{2}(1-x^{2})\,dx=\int_{-1}^1u'(x)^{2}(1-x^{2})\,dx,$$
and inequality \eqref{1006f} is proved and so inequality \eqref{feb23} of Theorem \ref{1405a}.

If equality holds in \eqref{feb23}, equality also holds in Lemma \ref{lema1}, and so $u(x)=a_{0}+a_{1}x$
that implies $p(t)=a_{0}+a_{}\sin(t)$.  \hfill $\square$

\bigskip 

As said at the end of section \ref{2804e}, Theorem \ref{1405a} has as a consequence Minkowski's inequality \eqref{1506} for the special case of convex surfaces of revolution, showing also that the equality holds only for spheres.

\section{A sharp  upper bound for the Minkowski Deficit}
Consider a  surface  of revolution  $S$ obtained by  rotating  a curve  $C$ in the  $x,z$-plane about the  $z$-axis.
As in section \ref{02a}, suppose that 
$C$ is given by a support function $p(t)$, with  $-\pi/2\leq t\leq \pi/2$,  satisfying  $p(t)>0$ i $p'(\pi/2)=p'(-\pi/2)=0$.

The Minkowski deficit  of the surface   $S$
is the quantity
$$D_{M}(S)=M^{2}-4\pi F$$
where  $M$ is the total mean curvature of  $S$ and $F$ its area.
As we have seen in section \ref{2804e}, 
in terms of the support function $p(t)$
\begin{equation}\label{02b}D_{M}(S)=4\pi^{2}Q_{M}(p)\end{equation}
where
$$Q_{M}(p)=\int_{-\pi/2}^{\pi/2}p'^{2}\cos(t)\,dt-2\int_{-\pi/2}^{\pi/2}p^{2}\cos(t)\,dt+(\int_{-\pi/2}^{\pi/2}p\cos(t)\,dt)^{2}.$$

On the other hand,  the 
isoperimetric deficit of any meridian curve, for instance $C$ together with its reflection with  respect to the $z$-axis, is given by $$D_{iso}(C)=L^{2}-4\pi A$$
where $L$ is the length of the meridian curve  and $A$ is the area enclosed by it.
In terms of the support function, we have  \begin{equation}\label{02c}D_{iso}(C)=4\pi Q_{iso}(p)\end{equation}
where $$Q_{iso}(p)=\int_{-\pi/2}^{\pi/2}p'^{2}\,dt-\int_{-\pi/2}^{\pi/2}p^{2}\,dt+\frac{1}{\pi}(\int_{-\pi/2}^{\pi/2}p\,dt)^{2}.$$

%
\begin{theorem}\label{0909}The Minkowski deficit of a surface of revolution 
is bounded 
above 
by  $\pi$ times the  isoperimetric deficit of any  meridian curve. More precisely, with the notation above,  \begin{equation}\label{0909b}D_{M}(S)\leq \pi D_{iso}(C).\end{equation}
The constant $\pi$ is sharp and equality holds only when $S$ is a sphere.
\end{theorem}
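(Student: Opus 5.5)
The plan is to reduce \eqref{0909b} to an inequality between the two quadratic forms. By \eqref{02b} and \eqref{02c}, $D_M(S)\le \pi D_{iso}(C)$ reads $4\pi^2Q_M(p)\le 4\pi^2 Q_{iso}(p)$. So it suffices to prove
$$\Delta(p):=Q_{iso}(p)-Q_M(p)\ge 0$$
for every admissible $p$, with equality only for $p=a_0+a_1\sin t$. These are exactly the support functions of spheres: a constant plus a vertical translation, the only translations compatible with rotational symmetry.

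\textbf{Step 1: reduction modulo the kernel.} Both forms vanish on $V=\{a_0+a_1\sin t\}$. For $Q_M$ this is the equality case of Theorem \ref{1405a}; for $Q_{iso}$ it is the equality case of the isoperimetric (Wirtinger) inequality. Both forms are nonnegative: $Q_M$ by Theorem \ref{1405a}, and $Q_{iso}$ by the isoperimetric inequality applied to the symmetrized meridian. A nonnegative quadratic form vanishing on $V$ has associated bilinear form $B(p,h)=0$ for $h\in V$. Hence $Q_M$, $Q_{iso}$ and $\Delta$ are unchanged by $p\mapsto p+h$, $h\in V$. So we may normalize $p$ so that its projection onto $V$ is zero, in a suitable sense fixed in Step 2.

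\textbf{Step 2: expansion.} Extend $p$ to the support function $P$ of the symmetrized meridian, $P(\pi-t)=p(t)$. In the variable $s=t-\pi/2$ the function $P$ is even, so
$$p(t)=c_0+\sum_{k\ge1}c_k\cos\bigl(k(t-\pi/2)\bigr).$$
In these coordinates $Q_{iso}$ is diagonal: $Q_{iso}(p)=\tfrac{\pi}{2}\sum_{k\ge2}(k^2-1)c_k^2$. The kernel $V$ corresponds to $c_0,c_1$.

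For $Q_M$ I would instead use the splitting
$$\Delta(p)=\int_{-\pi/2}^{\pi/2}(p'^2-p^2)(1-\cos t)\,dt+\Bigl[\int_{-\pi/2}^{\pi/2}p^2\cos t\,dt+\frac1\pi\Bigl(\int_{-\pi/2}^{\pi/2} p\,dt\Bigr)^2-\Bigl(\int_{-\pi/2}^{\pi/2}p\cos t\,dt\Bigr)^2\Bigr].$$
Two things must then be shown. First, a weighted Wirtinger inequality: $\int p'^2(1-\cos t)\ge\int p^2(1-\cos t)$ for $p$ normalized as in Step 1. The weight $1-\cos t$ vanishes at $t=0$ and is largest at the endpoints, where the boundary condition $p'(\pm\pi/2)=0$ holds. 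Second, the bracket must be nonnegative; there Cauchy--Schwarz, $(\int p\cos t)^2\le\int p^2\cos t\cdot\int\cos t$, gives a first handle.

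\textbf{Main obstacle.} I expect Step 2 to be the hard part. The weight $\cos t$ destroys the diagonal structure, so $Q_M$ in the $c_k$ basis is a full matrix. The two pieces of the splitting may also not be separately nonnegative, so mass may have to be moved between them using the normalization freedom of Step 1. My first attempt would be to choose $a_0,a_1$ optimally for the weighted pieces, and then prove the weighted Wirtinger inequality by writing $p'^2-p^2=(p'-p\tan\psi)^2+\dots$ with a suitable auxiliary function $\psi$ (a Picone-type identity). Failing that, I would compute the matrix of $\Delta$ in the Legendre basis of Lemma \ref{lema1}, where $Q_M$ is diagonal with entries $n(n+1)-2$, and compare it with $Q_{iso}$ there.

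\textbf{Sharpness and equality.} Sharpness of $\pi$ comes from test functions $p=\varepsilon\varphi(t/\delta)$ with $\varphi$ smooth, compactly supported and of mean zero, and $\delta\to0$. In $Q_{iso}$ and $Q_M$ the derivative terms dominate, being of order $\varepsilon^2\delta^{-1}$, while the other terms are $O(\varepsilon^2\delta)$. Near $t=0$ we have $\cos t\approx1$, so both derivative terms agree to leading order and $Q_M(p)/Q_{iso}(p)\to1$. Since $\varepsilon$ is small, $p+1$ still satisfies \eqref{2804d}. Finally, equality in \eqref{0909b} forces equality in every estimate above. This gives $p\in V$, i.e.\ $S$ is a sphere.
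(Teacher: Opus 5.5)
\textbf{There is a genuine gap: the inequality $\Delta(p)=Q_{iso}(p)-Q_M(p)\ge 0$ is never proved.} Apart from sharpness, that inequality is the whole content of the theorem. Step 2 lists candidate strategies and leaves the ``main obstacle'' open. Consequently your closing claim, that equality forces equality ``in every estimate above'', has nothing to refer to.

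Moreover, the splitting you propose cannot work as stated.
\begin{itemize}
\item The bracket $\int p^2\cos t+\frac1\pi(\int p)^2-(\int p\cos t)^2$ is not nonnegative, even after normalization. Take the even function $p=\cos t+\alpha$ with $\alpha=\frac{\pi/2-2}{\pi-2}$. It satisfies $\int_{-\pi/2}^{\pi/2}(1-\cos t)p\,dt=0$, the orthogonality one would impose to get a Wirtinger inequality for the weight $1-\cos t$. Hence $\int p\,dt=\int p\cos t\,dt=2+\pi\alpha$, and the bracket equals $\frac43+\pi\alpha+2\alpha^2+(\frac1\pi-1)(2+\pi\alpha)^2\approx-0.022$. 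A tiny modification near $t=\pm\pi/2$ enforces $p'(\pm\pi/2)=0$ without changing the sign.
\item Your Cauchy--Schwarz handle is too weak. Since $\int\cos t\,dt=2$, it only gives bracket $\ge\frac1\pi(\int p)^2-\int p^2\cos t$.
\item The normalization freedom does not help. The two pieces add up to $\Delta(p)$, which is invariant under $p\mapsto p+h$ with $h\in V$. So choosing $a_0,a_1$ depending on $p$ to make both pieces nonnegative is merely a restatement of $\Delta(p)\ge 0$.
\item The Legendre fallback has the same defect. $Q_M$ is diagonal there, but $Q_{iso}$ acquires Chebyshev-type weights $(1-x^2)^{\pm1/2}$ and becomes a full matrix, so comparing the two is the original problem again.
\end{itemize}

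\textbf{The missing idea is a different grouping.} Put $\int p^2\cos t$ together with the weighted Dirichlet term, and work on $[0,\pi/2]$ with
$$\mathcal{L}(f)=\int_0^{\pi/2}\bigl((1-\cos t)f'^2+(2\cos t-1)f^2\bigr)\,dt=\int_0^{\pi/2}(1-\cos t)\sin^2 t\,\Bigl[\Bigl(\frac{f}{\sin t}\Bigr)'\Bigr]^2\,dt.$$
This follows from a pointwise identity whose remainder is the exact derivative $\bigl((1-\cos t)\cot t\,f^2\bigr)'$. That derivative integrates to zero because $(1-\cos t)\cot t\to0$ as $t\to0$ and $\cot(\pi/2)=0$. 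It is a Picone identity with ground state $\sin t$, as you suspected. For this grouping it needs no normalization: $\mathcal{L}(f)\ge0$ for every $f$, with equality only for $f=b\sin t$.

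Then decompose $p=p_e+p_o$; the cross terms vanish by parity.
\begin{itemize}
\item For the odd part, $\int p_o=\int p_o\cos t=0$, so $\Delta(p_o)=2\mathcal{L}(p_o|_{[0,\pi/2]})\ge0$.
\item For the even part, set $h=p_e|_{[0,\pi/2]}$, $I=\int_0^{\pi/2}h\,dt$ and $J=\int_0^{\pi/2}h\cos t\,dt$. Then $\Delta(p_e)=2\bigl(\mathcal{L}(h)+\frac2\pi I^2-2J^2\bigr)$. The needed bound $2J^2\le\mathcal{L}(h)+\frac2\pi I^2$ is Cauchy--Schwarz for the positive definite form $\langle f,g\rangle=\int_0^{\pi/2}\bigl((1-\cos t)f'g'+(2\cos t-1)fg\bigr)dt+\frac2\pi\int_0^{\pi/2}f\,dt\int_0^{\pi/2}g\,dt$, applied to $h$ and $1$. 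Indeed $\langle h,1\rangle=2J$ and $\langle1,1\rangle=2$.
\end{itemize}
The equality cases of the identity and of Cauchy--Schwarz then give $p=a+b\sin t$.

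Your sharpness argument is fine and is essentially the one in the paper.
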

\begin{proof} In view of the equalities  \eqref{02b} and \eqref{02c}, it suffices to prove  that
$$Q_{M}(p)\leq Q_{iso}(p).$$

Note that
\begin{eqnarray*}Q_{iso}(p)-Q_{M}(p)&=&\int_{-\pi/2}^{\pi/2}(1-\cos(t))p'^{2}\,dt+
\int_{-\pi/2}^{\pi/2}(2\cos(t)-1)p^{2}\,dt\\&+&\frac{1}{\pi}\big(\int_{-\pi/2}^{\pi/2}p\,dt\big)^{2}-\big(\int_{-\pi/2}^{\pi/2}p\cos(t)\,dt\big)^{2}.
\end{eqnarray*}
Thus, we need to prove  \begin{equation}\label{0509a}Q_{iso}(p)-Q_{M}(p)\geq 0.\end{equation}

Write $p=p_{e}+p_{o}$ for the decomposition of $p$ into its even and odd parts. 
We first  prove \eqref{0509a} for the even and odd parts separately.

\begin{lema}\label{0509}
For every smooth function $f$ on $[0,\pi/2]$,
define 
$${\cal L}(f)=\int_{0}^{\pi/2}\big((1-	\cos(t))f'^{2}+(2\cos(t)-1)f^{2}\big)\,dt.$$
 Then
$${\cal L}(f)=\int_{0}^{\pi/2}(1-\cos(t))\sin^{2}(t)\bigg[\big( \frac{f(t)}{\sin(t)}\big)' \bigg]^{2}\,dt.$$
In particular \(\mathcal L(f)\ge0\), and equality holds precisely when
\(f(t)=b\sin t\), for some constant $b$.
\end{lema}
\begin{proof}
The result follows directly from the identity 
$$(1-	\cos(t))f'^{2}+(2\cos(t)-1)f^{2}=(1-\cos(t))\sin^{2}(t)\bigg[\big( \frac{f(t)}{\sin(t)}\big)' \bigg]^{2}+\bigg((1-\cos(t))\frac{\cos(t)}{\sin(t)}f^{2}\bigg)'$$
which is easily verified. $\square$
\end{proof}

Fir the odd part, set $f=p_{o}\big|_{[0, \pi/2]}$. Since $p_{o}$ is odd, 
$$\int_{-\pi/2}^{\pi/2}p_{o}\,dt=\int_{-\pi/2}^{\pi/2}p_{o}\cos(t)\,dt=0.$$
Thus, by Lemma \ref{0509}, 
\begin{equation}\label{0909d}Q_{iso}(p_{o})-Q_{M}(p_{o})=2{\cal L}(f)\geq 0.\end{equation}

For the even part, let $h=p_{e}\big|_{[0,\pi/2]}$ and abbreviate  $$I(h)=\int_{0}^{\pi/2}h(t)\,dt,\qquad J(h)=\int_{0}^{\pi/2}h\cos(t)\,dt.$$
Then 
\begin{equation}\label{0509b}Q_{iso}(p_{e})-Q_{M}(p_{e})=2\bigg({\cal L}(h)+\frac{2}{\pi}I(h)^{2}-2J(h)^{2}\bigg).\end{equation}

Define the symmetric bilinear form for smooth functions on $[0,\pi/2]$ by 
\begin{equation*}
 \langle f,g\rangle
 :=\int_0^{\pi/2}
 \Bigl((1-\cos t)f'g'+(2\cos t-1)fg\Bigr)\,dt
 +\frac1a\left(\int_0^{\pi/2} f(t)\,dt\right)\left(\int_0^{\pi/2} g(t)\,dt\right).
\end{equation*}
It is positive definite.  Indeed, 
since $$ \langle f,f\rangle={\cal L}(f)+\frac{2}{\pi}\big(\int_{0}^{\pi/2}f(t)\,dt\big)^{2}$$
by Lemma~\ref{0509}, $ \langle f,f\rangle\geq 0$. Moreover $\langle f,f\rangle=0$ if and only if $f=0;$ hence the form is non-degenerate.

We then have 
\begin{equation*}
 \langle h,h\rangle
 =\mathcal L(f)+\frac{2}{\pi}I(h)^2.
\end{equation*}
Moreover,
\begin{align*}
 \langle h,1\rangle
 &=\int_0^{\pi/2}(2\cos t-1)h(t)\,dt+I(h)
 =2J(h),
\\
 \langle1,1\rangle
 &=\int_0^{\pi/2}(2\cos t-1)\,dt+\pi/2=2.
\end{align*}
The Cauchy--Schwarz inequality in \(\langle\cdot,\cdot\rangle\) therefore gives
\[
 4J(h)^2
 =\langle h,1\rangle^2
 \le \langle h,h\rangle\langle1,1\rangle
 =2\langle h,h\rangle.
\]
Equivalently,
\begin{equation*}
 2J(h)^2
 \le \mathcal L(h)+\frac{2}{\pi}I(h)^2.
\end{equation*}
Substitution into \eqref{0509b} proves
\begin{equation*}
 (Q_{iso}(p_{e})-Q_{M}(p_{e})) \geq 0.
\end{equation*}
Equality in Cauchy--Schwarz occurs exactly when \(h\) is a constant.  Hence the equality for the  even part holds for a  constant function.

\medskip
Finally, since the cross terms between the even and odd parts vanish, we have 
\begin{eqnarray}\label{0909c}Q_{iso}(p_{e}+p_{o})-Q_{M}(p_{e}+p_{o})&=&Q_{iso}(p_{e})-Q_{M}(p_{e})+Q_{iso}(p_{0})-Q_{M}(p_{o})\\&+& 		\int_{-\pi/2}^{\pi/2}(1-\cos(t))2p'_{e}p'_{o}\,dt+\int_{-\pi/2}^{\pi/2}2p_{e}p_{o}\,dt\geq 0,\nonumber\end{eqnarray}
because the two last integrands are odd functions.

\bigskip
It remains to show that the coefficient 1 in $Q_{M}(p)\leq Q_{iso}(p)$ cannot be decreased. Let $\eta $ be a smooth function on $\R$ with compact support contained in $(-1,1)$. 
For sufficiently small $\epsilon >0$, define

$$q_{\epsilon}(t)=\eta(\frac{t}{\epsilon}),\qquad -\pi/2\leq t\leq \pi/2.$$

The support of $q_{\epsilon}(t)$ is contained in $(-\epsilon, \epsilon)$. 
For $\epsilon<\pi/2, $ one has

\begin{eqnarray*}
\int_{-\pi/2}^{\pi/2}q_{\epsilon}(t)\,dt&=&\epsilon \int_{-\pi/2\epsilon}^{\pi/2\epsilon}\eta(u)\,du= \epsilon \int_{-1}^{1}\eta(u)\,du=\epsilon A,\\
\end{eqnarray*}
and analogously 
\begin{eqnarray*}
\int_{-\pi/2}^{\pi/2}q_{\epsilon}^{2}(t)\,dt&=&\epsilon \int_{-1}^{1}\eta^{2}(u)\,du=\epsilon B,\\
\int_{-\pi/2}^{\pi/2}[q_{\epsilon}'(t)]^{2}\,dt&=&\frac{1}{\epsilon}\int_{-1}^{1}[\eta'(u)]^{2}\,du=\frac{1}{\epsilon}C,\end{eqnarray*}
with $A,B,C$ 
 some constants.
 
Hence
$$Q_{iso}(q_{\epsilon})=\frac{1}{\epsilon}C-\epsilon B+\frac{1}{\pi}\epsilon^{2}A^{2}=\frac{1}{\epsilon}C+O(\epsilon).$$

We also have
\begin{eqnarray*}
\int_{-\pi/2}^{\pi/2}q_{\epsilon}(t)\cos(t)\,dt&=&\int_{-\pi/2}^{\pi/2}\eta(\frac{t}{\epsilon})\cos(t)\,dt=\epsilon\int_{-\pi/2\epsilon}^{\pi/2\epsilon}\eta(u)\cos(u\epsilon)du=\epsilon A+O(\epsilon^{3}),\\
\int_{-\pi/2}^{\pi/2}q_{\epsilon}^{2}(t)\cos(t)\,dt&=&\epsilon B+O(\epsilon^{3}),\\
\int_{-\pi/2}^{\pi/2}[q_{\epsilon}'(t)]^{2}\cos(t)\,dt&=&\frac{1}{\epsilon}C+O(\epsilon).\end{eqnarray*}

 Hence
 $$Q_{M}(q_{\epsilon})=\frac{1}{\epsilon}C+O(\epsilon).$$
 Thus 

$$\lim_{\epsilon\to 0}\frac{Q_{M}(q_{\epsilon})}{Q_{iso}(q_{\epsilon})}=1,$$
and the coefficient 1 in $Q_{M}(p)\leq Q_{iso}(p)$ cannot be decreased.

\medskip
We must still ensure that the test functions arise from strictly convex surfaces
of revolution. Set
$$p_{\epsilon}(t)=R_{\epsilon}+q_{\epsilon}(t), \qquad -\pi/2\leq t\leq \pi/2.$$
where $R_{\epsilon}>0$ is chosen sufficiently large so that $p_{\epsilon}(t)+p''_{\epsilon}(t)>0$.

Then, the surface $\Sigma_{\epsilon} $obtained by rotating de curve defined by the support function $p_{\epsilon}(t)$
is strictly convex and

$$\lim_{\epsilon\to 0}\frac{D_{M}(\Sigma_{\epsilon})}{D_{iso}(\Gamma_{\epsilon})}=\lim_{\epsilon \to 0}\pi\frac{Q_{M}(p_{\epsilon})}{Q_{iso}(p_{\epsilon})}=\pi, $$
where $\Gamma_{\epsilon}$ is a meridian curve of $\Sigma_{\epsilon}$, and this proves the optimality of $\pi$. 

 \medskip
Note that, geometrically, $\Sigma_{\epsilon}$ is obtained making  a small perturbation around the equator of a sphere. 
 
\medskip
Finally, if equality holds in \eqref{0909b} then also equality holds in \eqref{0909c} and so $Q_{iso}(p_{e})-Q_{M}(p_{e})=0$ and $Q_{iso}(p_{0})-Q_{M}(p_{o})=0$. As we have said, equality for the even part implies $p_{e}=a$ for some constant $a$, and  by \eqref{0909d} and Lemma \ref{0509}, it is  $p_{o}=b\sin(t)$. Therefore  $p(t)=a+b\sin(t)$ and $C$ is a circle and $S $
a sphere. $\square$
 \end{proof}


\medskip

{\em Acknowledgements}. During the preparation of this manuscript, the authors used AI (LLM got-5.6-sol) as an auxiliary tool in the proof of Lemma \ref{0509} and with the choice of the  symmetric bilinear form  introduced in the proof of Theorem \ref{0909}. The authors independently verified all mathematical statements and are solely responsible for the contents of the manuscript.

 \bibliographystyle{plain}
\bibliography{bibliografia}

\bigskip

{\em Joaquim Bruna.} Departament de Matemàtiques, Universitat Aut\`{o}noma de Barcelona\\ 08193 Bellaterra, Barcelona, Catalonia, joaquim.bruna@uab.cat

{\em Julià Cufí.} Departament de Matemàtiques, Universitat Aut\`{o}noma de Barcelona\\ 08193 Bellaterra, Barcelona, Catalonia, julia.cufi@uab.cat

{\em Agustí Reventós.} Departament de Matemàtiques, Universitat Aut\`{o}noma de Barcelona\\ 08193 Bellaterra, Barcelona, Catalonia, agusti.reventos@uab.cat
\end{document}